\newcommand{\R}{{\mathbb{R}}}
\newcommand{\eps}{\varepsilon}
\newcommand{\E}{{\mathbb{E}}}
\renewcommand{\P}{{\mathcal{P}}}
\newcommand{\A}{{\mathcal{A}}}
\newcommand{\B}{{\mathcal{B}}}
\newcommand{\x}{{\boldsymbol{x}}}
\renewcommand{\u}{{\boldsymbol{u}}}
\newcommand{\e}{{\boldsymbol{e}}}
\newcommand{\y}{{\boldsymbol{y}}}
\renewcommand{\a}{{\boldsymbol{a}}}
\renewcommand{\b}{{\boldsymbol{b}}}
\newcommand{\intr}{{\rm int}}
\renewcommand{\phi}{{\boldsymbol{\varphi}}}
\newcommand{\qtq}[1]{{\quad\text{#1}\quad}}
\newtheorem{theorem}{Theorem}[section]
\newtheorem{lemma}[theorem]{Lemma}
\newtheorem{proposition}[theorem]{Proposition}
\theoremstyle{remark}
\numberwithin{equation}{section}
\title{On the Hadwiger covering problem in low dimensions}
\author{A.\ Prymak}
\address{Department of Mathematics, University of Manitoba, Winnipeg, MB, R3T 2N2, Canada}
\email{prymak@gmail.com}
\thanks{Corresponding author: Andriy Prymak; email: {\tt prymak@gmail.com} \\ \phantom{fff} The first author was supported by NSERC of Canada Discovery Grant RGPIN-2020-05357.}
\author{V.\ Shepelska}
\address{Department of Mathematics, University of Manitoba, Winnipeg, MB, R3T 2N2, Canada}
\email{shepelska@gmail.com}
\thanks{The second author was partially supported by the PIMS Postdoctoral Fellowship.}
\keywords{Illumination problem, illumination number, covering number, covering by smaller homothetic copies, convex body, 4-cube}
\subjclass[2010]{Primary 52A20; Secondary 52A37, 52A40, 52C17}
\begin{document}

\begin{abstract}
Let $H_n$ be the minimal number of smaller homothetic copies of an $n$-dimensional convex body required to cover the whole body. Equivalently, $H_n$ can be defined via illumination of the boundary of a convex body by external light sources. The best known upper bound in three-dimensional case is $H_3\le 16$ and is due to Papadoperakis. We use Papadoperakis' approach to show that $H_4\le 96$, $H_5\le 1091$ and $H_6\le 15373$ which significantly improve the previously known upper bounds on $H_n$ in these dimensions.
\end{abstract}

\maketitle

\section{Introduction and results}

Let $\E^n$ denote the $n$-dimensional Euclidean space. A convex body in $\E^n$ is a convex compact set having non-empty interior. For two sets $A,B\subset \E^n$ we let $C(A,B)$ be the smallest number of translates of $B$ required to cover $A$, and let $\intr(A)$ denote the interior of $A$.

Hadwiger~\cite{Ha} asked what is the smallest value $H_n$ of $C(K,\intr(K))$ for arbitrary convex body $K$ in $\E^n$. This is equivalent to the question about the least number of smaller homothetic copies of $K$ which are able to cover $K$, and, as was shown by Boltyanski~\cite{Bo}, to the question about the smallest number of external light sources required to illuminate the boundary of every convex body. Considering cube, one immediately gets $H_n\ge 2^n$. The related primary conjecture, which is commonly referred to as Hadwiger conjecture or as Gohberg-Markus covering conjecture, is that $H_n=2^n$, but this is known (and is simple) only for $n=2$. Below we give a brief overview of the known results about $H_n$. For a detailed history of the question and survey including many partial results for special classes of convex bodies see, e.g., \cite{Be}.

The best known explicit upper bound on $H_n$ in high dimensions is a combination of Rogers's result~\cite{Ro} on the covering density of $\E^n$ by translates of arbitrary convex body with the Rogers-Shephard inequality~\cite{Ro-Sh} bounding the volume of the difference body. The interested reader is referred to~\cite{Be}*{Section~2.2} for further details and related results. Here we only state the actual bound, which is
\begin{equation}\label{eqn:asymptotic}
H_n\le \binom{2n}{n}n(\ln n+\ln\ln n+5),
\end{equation}
where $5$ can be replaced by $4$ for sufficiently large $n$. %Recently, Huang, Slomka, Tkocz and Vritsiou obtained a remarkable asymptotic improvement of this estimate in~\cite{HSTV}:
%\begin{equation*}
%H_n\le \binom{2n}{n} e^{-c\sqrt{n}},
%\end{equation*}
%where $c>0$ is an implicit constant. 
Lassak~\cite{La} showed that
\begin{equation}\label{eqn:lassak}
H_n\le (n+1)n^{n-1}-(n-1)(n-2)^{n-1},
\end{equation}
which is better than~\eqref{eqn:asymptotic} for $n\le 5$ and up to now was the best known bound for $n=4,5$.
In~\cite{Pa} Papadoperakis showed that $H_3\le 16$, which is the best known bound in three dimensions.

The key idea of~\cite{Pa} is to reduce the problem of Hadwiger to that of covering specific sets of relatively simple structure by certain rectangular parallelotopes. Namely, we have $H_n\le C_n$, where $C_n$ is a related covering number which will be introduced in \cref{sec:Pap}, see~\eqref{eqn:C_n}. In these terms, it was shown in~\cite{Pa} that $H_3\le C_3\le 16$. In fact, it is not hard to prove the estimate in the other direction and establish that $C_3=16$, so $16$ is the best one can get with this method for three dimensions.

Our goal is to obtain upper bounds of $C_n$ for $n\ge4$. We begin with the bound for $n\ge 5$.

\begin{proposition}\label{thm:general n}
	For any $n\ge 5$ we have
	$C_n \le 2n(n-1)(n-2)^{n-2}+2n+1$.
\end{proposition}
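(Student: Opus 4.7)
The plan is to exhibit an explicit covering by rectangular parallelotopes realizing the claimed count. Recall that $C_n$ (defined in \cref{sec:Pap} via~\eqref{eqn:C_n}) is the minimum number of axis-parallel parallelotopes from a certain admissible family, determined by the shape constraints of the Papadoperakis reduction, needed to cover a specific region attached to the unit cube. Because this region is invariant under the hyperoctahedral group, the work reduces to covering the layer adjacent to one facet and then taking $2n$ symmetric copies.

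First, a single slightly-shrunk concentric copy of the cube handles the interior away from all $2n$ facets, accounting for the $+1$ in the bound. Second, by symmetry, fix a facet $F := [-1,1]^{n-1}\times\{1\}$ and cover its adjacent layer in two parts. The \emph{central} part of the facet, where the first $n-1$ coordinates satisfy $|x_i|\le\lambda$ for a suitable $\lambda\in(0,1)$, is covered by one flat parallelotope thin in the $n$-th direction. Taken over all $2n$ facets, these central boxes contribute the $+2n$ term.

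Third, the \emph{peripheral} portion of the facet, where some transverse coordinate $|x_i|$ exceeds $\lambda$, is covered as follows. For each $i\in\{1,\ldots,n-1\}$, I would use a grid of $(n-2)^{n-2}$ parallelotopes elongated in the $i$-th direction (so each box stretches all the way across the slab $\{|x_i|\ge\lambda\}$) and subdividing each of the remaining $n-2$ transverse directions into $n-2$ equal intervals. Summing over the $n-1$ choices of $i$ and over the $2n$ facets gives the main term $2n(n-1)(n-2)^{n-2}$.

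The main obstacle will be the simultaneous verification that every box belongs to the admissible family defining $C_n$ (satisfies the prescribed shape constraints) and that the boxes genuinely cover the target region, paying particular attention to the \emph{corner} zones where several $|x_i|$ simultaneously exceed $\lambda$. The hypothesis $n\ge 5$ is expected to enter precisely here: the transverse subdivision into $n-2\ge 3$ equal pieces has to match the permissible widths and elongations of the boxes, and this compatibility fails in low dimensions (where a separate, sharper construction is needed, as suggested by the bound $H_4\le 96$ claimed in the abstract).
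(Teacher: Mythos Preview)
Your proposal rests on a misreading of the quantity $C_n$. By definition~\eqref{eqn:C_n}, the set to be covered is $A\cup B_{n-2,n}$, where $B_{n-2,n}$ is the $(n-2)$-skeleton of $[0,1]^n$ and $A\in\A_n$ is a configuration of at most $2n$ points on the facets; the admissible family $\P_n$ consists of axis-parallel boxes whose side lengths \emph{sum} to strictly less than $1$. Neither the interior of the cube nor the full $(n-1)$-dimensional facets belong to the target set, so your ``slightly-shrunk concentric copy of the cube'' (whose side-sum is close to $n$, hence nowhere near admissible) and your central/peripheral facet-layer decomposition address the wrong problem entirely. Note also that the sets $A\in\A_n$ are not invariant under the hyperoctahedral group, so symmetry cannot reduce the task to a single facet.

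In the paper's actual argument the three terms in the bound arise quite differently. The $2n$ points of $A$ are covered trivially, one degenerate box each. The $(n-2)$-skeleton has $2n(n-1)$ faces of dimension $n-2$; each such face is a unit $(n-2)$-cube and is partitioned into $(n-2)^{n-2}$ sub-cubes of side $\tfrac{1}{n-2}$, whose side-sum equals $1$ exactly and hence lies only in $\P_n^*$, not in $\P_n$. The ``$+1$'' is the genuinely nontrivial ingredient: one auxiliary box $P_0\in\P_n$ is introduced, and a sequential compression procedure (\cref{lem:eps neighborhood,lem:compression along one dir}) then propagates strictness face by face, shrinking each $\P_n^*$-box into a $\P_n$-box while maintaining the covering of $B_{n-2,n}$. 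Finally, the hypothesis $n\ge5$ is not used anywhere in the proof; it appears in the statement only because sharper dedicated constructions are available for $n=3,4$.
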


We obtain much sharper estimate for $n=4$.

\begin{proposition}\label{thm:main}
	For the four-dimensional case we have $C_4\le 96$.
\end{proposition}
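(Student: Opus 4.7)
The starting point is the reduction established in \cref{sec:Pap}, where the Hadwiger number $H_n$ is bounded by a combinatorial covering number $C_n$ defined in~\eqref{eqn:C_n}: a prescribed ``target'' set built from the cube $[0,1]^n$ must be covered by rectangular parallelotopes satisfying an admissibility condition on their side lengths. To obtain $C_4\le 96$ the plan is to follow the general scheme of \cref{thm:general n} but to leverage the small dimension in order to eliminate the auxiliary $2n+1$ pieces that the general argument requires.

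First, I would set up coordinates adapted to the symmetry group of the $4$-cube and organise a candidate cover by symmetry classes. The main block of parallelotopes, as in the proof of \cref{thm:general n}, can be indexed by a choice of a pair of coordinate directions ($n(n-1)=12$ choices), an orientation ($2$ choices), and an ordering of the remaining $n-2=2$ coordinates ($2^{n-2}=4$ choices), giving exactly $2n(n-1)(n-2)^{n-2}=96$ pieces for $n=4$. I would describe each parallelotope explicitly in terms of parameters (ratios of side lengths to the side of the ambient cube) that are to be fixed at the end of the argument.

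Next I would prove that these $96$ parallelotopes already cover the entire target set, so that no correction pieces are needed. In the general proof for $n\ge 5$, the analogous cover leaves uncovered a small neighbourhood of the centre of the cube together with a region near the centre of each pair of opposite facets; these are swallowed by the extra $2n+1$ parallelotopes. The heart of the argument for $n=4$ is to show that, by tuning the free parameters in the $96$ main parallelotopes, those residual regions are already contained in the existing cover. This reduces to a finite list of explicit inequalities in a handful of variables, expressing containment of the residual regions on one hand and admissibility of each parallelotope on the other.

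I expect this balancing of the parameters to be the principal obstacle: the inequalities must be tight (they fail for $n\ge 5$, as reflected in the weaker estimate in \cref{thm:general n}), so squeezing the absorption through in dimension four calls for a careful case-by-case analysis, likely with several subcases according to which coordinates are ``small'' or ``large''. Once all the inequalities are checked, the $96$ parallelotopes form a valid cover, yielding $C_4\le 96$ and, via the reduction of \cref{sec:Pap}, the desired bound $H_4\le 96$.
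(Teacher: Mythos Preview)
Your plan rests on a misreading of the role of the extra $2n+1$ boxes in the proof of \cref{thm:general n}. They do not swallow residual regions near the centre of the cube or of its facets. Rather, $2n$ of them cover the point set $A\in\A_n$ --- the tangency points, which can lie almost anywhere on the facets and vary with the body --- and the remaining one is the ``starter'' box $P_0$ that initiates the compression procedure of \cref{lem:compression along one dir}, turning the $\P_n^*$-cover into a $\P_n$-cover. Tuning parameters in the $96$ main boxes does not explain how arbitrary tangency points would be covered, nor how the strict-inequality cover is obtained without a starter. Moreover, the $96$ boxes of the general construction for $n=4$ are four $\tfrac12\times\tfrac12$ squares per $2$-face; any box from $\P_4$ meets a fixed $2$-face in a rectangle whose two relevant side lengths sum to less than $1$ and hence has area strictly below $\tfrac14$, so four such boxes can never cover a unit square. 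Thus those $96$ pieces cannot simply be nudged into $\P_4$ while still covering $B_{2,4}$.

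The paper's argument is structured quite differently. It first covers $B_{2,4}$ by $88$ boxes from $\P_4^*$: sixteen vertex boxes, thirty-two edge boxes, and forty face boxes, the latter organised via a bipartition of the $24$ faces into ``type~A'' and ``type~B'' according to a careful assignment of special edges. Separately, a case analysis on the location of the tangency points shows that any $A_4\in\A_4$ can be covered by $8$ boxes from $\P_4$ in such a way that one of those eight already contains a specific segment (side, semi-central, or central) on an edge or face of the cube. That segment plays the role of the starter $P_0$, so no ninth box is needed, and the modification procedure then upgrades the $88$-box cover from $\P_4^*$ to $\P_4$ face by face. The total is $88+8=96$, not $96+0$.
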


As an immediate corollary of the above two propositions, since $H_n\le C_n$, we obtain the following new upper bounds on $H_n$ for $n=4,5,6$, which is the main result of this paper.

\begin{theorem}\label{cor:main}
	$H_4\le 96$, $H_5\le 1091$, $H_6\le 15373$.
\end{theorem}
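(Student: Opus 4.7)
The plan is straightforward: combine the fundamental inequality $H_n \le C_n$, which encapsulates the Papadoperakis reduction set up in \cref{sec:Pap}, with the two preceding propositions. Once $H_n\le C_n$ is in hand, \cref{cor:main} reduces to a short numerical verification: the case $n=4$ is handled by \cref{thm:main}, and the cases $n=5,6$ by \cref{thm:general n}.

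For $n=4$, the bound $H_4 \le 96$ is immediate: $H_4\le C_4\le 96$ by \cref{thm:main}. For $n=5$ and $n=6$, I would substitute into the explicit formula of \cref{thm:general n}. Specifically, for $n=5$
\[
2n(n-1)(n-2)^{n-2}+2n+1 = 2\cdot 5\cdot 4\cdot 3^{3}+11 = 1080+11 = 1091,
\]
and for $n=6$
\[
2n(n-1)(n-2)^{n-2}+2n+1 = 2\cdot 6\cdot 5\cdot 4^{4}+13 = 15360+13 = 15373.
\]
Combined with $H_n\le C_n$, these give $H_5\le 1091$ and $H_6\le 15373$.

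The genuine difficulty is of course not in this corollary but in the two propositions it invokes and in the upstream inequality $H_n\le C_n$. The latter requires the Papadoperakis reduction from an arbitrary convex body to a concrete covering problem for simpler target regions by axis-aligned rectangular parallelotopes. The main obstacles in practice are the explicit constructions realizing the estimates of \cref{thm:general n} and, especially, \cref{thm:main}: a combinatorial-geometric task where the $n=4$ case demands a far more delicate arrangement than the general-$n$ construction that already suffices for $n\ge 5$.
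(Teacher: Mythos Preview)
Your proposal is correct and matches the paper's approach exactly: the theorem is stated as an immediate corollary of \cref{thm:general n}, \cref{thm:main}, and the inequality $H_n\le C_n$ from \cref{prop:relation}, with the numerical values for $n=5,6$ obtained by substitution just as you compute.
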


Below we provide a table comparing various upper estimates on $H_n$ stated above, which shows that our estimates for $H_4$, $H_5$, and $H_6$ are roughly one third of the previously known results.
\vskip3mm
\begin{center}
\begin{tabular}{| c | c | c | c |}
	\hline
	$n$ & \cref{cor:main} &  \eqref{eqn:lassak} & \eqref{eqn:asymptotic} \\
	\hline
	$4$ & $96$ & $296$ & $1879$ \\
	\hline
	$5$ & $1091$ & $3426$ & $8927$ \\
	\hline
	$6$ & $15373$ & $49312$ & $40886$ \\
%	\hline
%	$7$ & $262515$ & $847442$ & $182862$ \\
	\hline
\end{tabular}
\end{center}
\vskip3mm

We will show in a forthcoming work that  $C_n\ge 4n^{n-2}+2n$ for $n\ge 5$ and that $C_4\ge 95$. This means that one cannot achieve an asymptotic improvement of the upper bound on $H_n$ using the connection $H_n\le C_n$ and that the result of \Cref{thm:main} cannot be improved by more than $1$. We believe that $C_4=95$, but proving this may require too much effort to justify such a small improvement, recall that the conjectured value of $H_4$ is $16$.

\Cref{thm:general n} is proved in \cref{sec:general upper}, while \Cref{thm:main} is proved in \cref{sec:4 upper}. %\Cref{cor:main} is an immediate corollary of \cref{thm:general n,thm:main,prop:relation}.

\section{Papadoperakis' reduction to covering problem}\label{sec:Pap}

We extend the notation $C(\cdot,\cdot)$ to the following two situations. For a set $A\subset \E^n$ and families $\A,\B$ of subsets of $\E^n$, we let $C(A,\B)$ be the smallest number of translates of elements of $\B$ required to cover $A$ and let $C(\A,\B)=\sup_{A\in\A} C(A,\B)$ be the smallest number of translates of elements of $\B$ needed to cover an arbitrary element of $\A$.

Let $B_{k,n}$ be the $k$-skeleton of the unit cube $[0,1]^n$, i.e., the union of all $k$-dimensional faces of $[0,1]^n$, or, in other words, the set of all points of the cube having at least $n-k$ coordinates equal to either $0$ or $1$.

Let $\e_i$ denote the $i$-th basic unit vector in $\E^n$, $1\le i\le n$. Note that if a point $\x\in\E^n$ satisfies $\x,\x+\e_i\in[0,1]^n$, then $\x$ and $\x+\e_i$ are necessarily on some opposite $(n-1)$-dimensional faces of the cube $[0,1]^n$ (equivalently, the $i$-th coordinate of these points is $0$ and $1$, respectively). In the collection of sets
\begin{equation*}
\A_n:=\left\{\bigcup_{i=1}^n\{\x_i,\x_i+\e_i\}: \x_i,\x_i+\e_i\in [0,1]^n \right\}
\end{equation*}
each set consists of at most $2n$ points on the boundary of $[0,1]^n$.

We will use the following two families of $n$-dimensional rectangular parallelotopes:
\begin{align*}
\P_n & :=\left\{\prod_{i=1}^n[x_i,x_i+\delta_i]: x_i\in\R, \delta_i\ge0, \sum_{i=1}^n\delta_i<1 \right\} \qtq{and} \\
\P_n^* & :=\left\{\prod_{i=1}^n[x_i,x_i+\delta_i]: x_i\in\R, \delta_i\ge0, \sum_{i=1}^n\delta_i\le 1 \right\}.
\end{align*}
Note that degeneration $\delta_i=0$ in some coordinates is allowed. For simplicity, we will refer to rectangular parallelotopes as boxes.

If  $\A$ is a collection of subsets of $\E^n$ and $B\subset\E^n$, we let $\A\cup B:=\{A\cup B: A\in\A \}$. Finally, we are ready to define the needed covering number:
\begin{equation}\label{eqn:C_n}
C_n:=C(\A_n \cup B_{n-2,n}, \P_n).
\end{equation}

In the above notations, it is established in \cite{Pa}*{Lemmas~1-4}  that $H_3\le C_3$. It turns out that the same arguments can be used in $\E^n$, $n\ge3$, to prove the following.

\begin{proposition}\label{prop:relation}
	$H_n\le C_n$ for any $n\ge3$.
\end{proposition}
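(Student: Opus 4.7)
The plan is to adapt the argument of \cite{Pa}*{Lemmas~1--4} from $n=3$ to arbitrary $n\ge 3$. As the authors note, the arguments in \cite{Pa} are not specific to dimension three, so the task is largely to check that each step in Papadoperakis' chain of lemmas goes through in higher dimension.

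The first step is a normalization. Given an arbitrary convex body $K\subset\E^n$, apply an affine transformation (whose composition with the illumination problem preserves the illumination number) so that $K\subset[0,1]^n$ and $K$ touches each of the $2n$ facets of the cube, with additional alignment so that for every coordinate $i$ there is a point $\x_i \in K\cap\{x_i=0\}$ whose mirror point $\x_i+\e_i$ also lies in $K\cap\{x_i=1\}$. This produces a unit-length axis-parallel chord inside $K$ in each coordinate direction, which is the geometric resource that Papadoperakis' construction exploits. With the $\x_i$'s fixed, the set $S_0:=\bigcup_{i=1}^n\{\x_i,\x_i+\e_i\}$ belongs to $\A_n$, so $S:=S_0\cup B_{n-2,n}$ lies in $\A_n\cup B_{n-2,n}$, and by the definition of $C_n$ we may cover $S$ by some collection of $N\le C_n$ boxes $B_1,\ldots,B_N\in\P_n$.

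The second step converts each covering box into a smaller homothetic copy of $K$. For $B_j=\prod_i[y_i^{(j)},y_i^{(j)}+\delta_i^{(j)}]$ with $\sigma_j:=\sum_i\delta_i^{(j)}<1$, define $K_j:=\sigma_j K+t_j$, where the translation $t_j$ is dictated by the role that $B_j$ plays in the covering of $S$: boxes containing one of the facet points $\x_i$ or $\x_i+\e_i$ are translated along the corresponding unit chord in the $\pm\e_i$ direction, and boxes covering a portion of the skeleton $B_{n-2,n}$ are translated ``diagonally'' using the side lengths $\delta_1^{(j)},\ldots,\delta_n^{(j)}$. Because $\sigma_j<1$ for every $j$, each $K_j$ is a genuine strictly smaller homothet of $K$.

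The third step is to verify that $\{K_j\}_{j=1}^N$ covers $\partial K$. Partition $\partial K$ according to position within $[0,1]^n$: points in the relative interior of a facet $\{x_i=\kappa\}$ are covered by the $K_j$'s coming from boxes that cover the associated $\A_n$-point (using the unit chord in direction $\e_i$), while points lying in or near the $(n-2)$-skeleton are covered by the $K_j$'s coming from boxes that cover the relevant portion of $B_{n-2,n}$. The main obstacle, inherited from the three-dimensional case, is the geometric lemma asserting that for each $j$ the box $B_j$ with $\sigma_j<1$ indeed yields a homothet $K_j$ covering its associated piece of $\partial K$; this is a convexity argument relying essentially on the unit-chord hypothesis and the strict inequality $\sigma_j<1$, and its statement and proof are dimension-independent versions of the corresponding step in \cite{Pa}. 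Assembling the three steps produces a cover of $K$ by at most $C_n$ smaller homothetic copies, which is exactly the inequality $H_n\le C_n$.
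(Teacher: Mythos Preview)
Your normalization step and identification of $S_0\in\A_n$ are correct in spirit (though you should say that $[0,1]^n$ is chosen as the \emph{minimal volume} circumscribed parallelotope, which is what forces the opposite tangency points to differ by $\e_i$; this is Lemma~3 in \cite{Pa} and is not just a matter of ``additional alignment''). The real gap, however, is in your conversion step.

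Your construction $K_j=\sigma_j K+t_j$ does not do what you claim, and the partition of $\partial K$ ``according to position within $[0,1]^n$'' is ill-posed: $\partial K$ lies mostly in the \emph{interior} of $[0,1]^n$, not on its facets or near its $(n-2)$-skeleton, so speaking of boundary points ``in the relative interior of a facet'' does not make sense. Even restricting attention to $\partial K\cap\partial([0,1]^n)$, a box $B_j$ covering the single tangency point $\x_i$ may have $\sigma_j$ arbitrarily small, and then $\sigma_j K+t_j$ is a tiny set that cannot cover the (possibly large) portion of $\partial K$ associated with that facet. There is no ``convexity argument relying on the unit-chord hypothesis'' that repairs this, because the object you defined is simply too small.

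What Papadoperakis actually does is different in mechanism. He works with \emph{shadows} (illumination directions), not homothets: to each covering box $B_j$ one associates a direction $\u_j$, and one shows that the shadows $s(\u_j,\intr(K))$ cover the \emph{entire cube} $[0,1]^n$, hence in particular $\partial K$. The crucial intermediate fact (which your write-up omits entirely) is that the unit chords $[\x_i,\x_i+\e_i]\subset K$ force $\intr(K)$ to contain a translate of \emph{every} box in $\P_n$; this is what makes each shadow large enough. The combinatorial content of Lemmas~2 and~4 in \cite{Pa} is then that covering $B_{n-2,n}$ together with the tangency points by boxes from $\P_n$ suffices to make the corresponding shadows cover all of $[0,1]^n$. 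Only after this does one invoke the standard equivalence between illumination and covering by smaller homothets. You should rewrite the conversion step along these lines rather than attempting to manufacture explicit homothets of ratio $\sigma_j$.
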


We only include an outline of the Papadoperakis' approach since the exposition in~\cite{Pa} is very concise and the generalization to the higher dimensions is straightforward.

First, the shadow $s(\u,X)=\{t\u+{\x}: t>0, {\x}\in X\}$ of a set $X$ when the light comes from the direction $\u$ is defined. Then illumination of the boundary of a convex body is equivalent to covering the boundary of the body by shadows of its interior (\cite{Pa}*{Lemma~1(e)}). Next, the parallelotope $P$ of smallest volume containing a given body $A$ is considered. Note that using affine transformations, one can assume that $P$ is a unit cube. Minimality of the volume of $P$ implies that the tangency points of the faces of $P$ with $A$ can be chosen in such a way that the pairs of points in the opposite faces are different by a unit vector (\cite{Pa}*{Lemma~3}). (In our terminology this means that the set of tangency points belongs to $\A_3$.) In turn, this implies that the interior of $A$ contains a translate of any box from $\P_3$. Finally, a combination of \cite{Pa}*{Lemmas~2 and~4} yields that covering a one-dimensional skeleton of the unit cube together with the tangency points by $m$ boxes from $\P_3$ implies that the whole $P$ and, in particular, the boundary of $A$ can be covered by $m$ shadows of $\intr(A)$, and hence $H_3\le C_3$. As was already mentioned, the same proof works for higher dimensions, and to arrive at our terminology one only needs to note that the union of the relative boundaries of the $(n-1)$-dimensional faces of $[0,1]^n$ is precisely $B_{n-2,n}$, the $(n-2)$-skeleton of the unit cube.

\section{Proof of \Cref{thm:general n}}\label{sec:general upper}

For $\eps>0$ and $A\subset\E^n$, the $\eps$-neighborhood of $A$ is the set of all points $\x\in\E^n$ such that for some $\y\in A$ the distance between $\x$ and $\y$ is less than $\eps$. By a neighborhood of $A$ we mean the $\eps$-neighborhood of $A$ for some $\eps>0$.

\begin{lemma}\label{lem:eps neighborhood}
	If $A\subset\E^n$ is covered by a finite number of boxes from $\P_n$, then each box can be modified so that a neighborhood of $A$ is covered by the resulting boxes while each new box is still from $\P_n$.
\end{lemma}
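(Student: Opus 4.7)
The plan is to exploit the strict inequality $\sum_{i=1}^n\delta_i<1$ in the definition of $\P_n$: each covering box has a positive slack, and since there are only finitely many boxes, the slacks admit a positive lower bound. I will enlarge every box by a small amount in every coordinate direction on both sides; this produces a new box that still lies in $\P_n$ and that contains an $\ell^\infty$-neighborhood of the original box.

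More precisely, let $B_1,\dots,B_m\in\P_n$ cover $A$, where $B_j=\prod_{i=1}^n[x_i^{(j)},x_i^{(j)}+\delta_i^{(j)}]$, and set $s_j:=1-\sum_{i=1}^n\delta_i^{(j)}>0$. Pick any $\eta$ with $0<\eta<\tfrac{1}{2n}\min_{1\le j\le m}s_j$, and define
\begin{equation*}
B_j':=\prod_{i=1}^n[x_i^{(j)}-\eta,\,x_i^{(j)}+\delta_i^{(j)}+\eta].
\end{equation*}
The $i$-th side length of $B_j'$ equals $\delta_i^{(j)}+2\eta$, so $\sum_{i=1}^n(\delta_i^{(j)}+2\eta)=(1-s_j)+2n\eta<1$ by the choice of $\eta$, which shows $B_j'\in\P_n$.

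It remains to check that $\bigcup_{j=1}^m B_j'$ covers an $\eta$-neighborhood of $A$. Given any $\x\in\E^n$ with $\dist(\x,A)<\eta$, choose $\y\in A$ with $\|\x-\y\|_2<\eta$; since $A$ is covered by the original boxes, $\y\in B_j$ for some $j$. Because $\|\x-\y\|_\infty\le\|\x-\y\|_2<\eta$, each coordinate $x_i$ lies within $\eta$ of $y_i\in[x_i^{(j)},x_i^{(j)}+\delta_i^{(j)}]$, so $x_i\in[x_i^{(j)}-\eta,x_i^{(j)}+\delta_i^{(j)}+\eta]$ and hence $\x\in B_j'$. There is no substantive obstacle here: the only point that needed attention is that the strict inequality defining $\P_n$ (together with finiteness of the cover) provides the uniform positive slack that permits the enlargement.
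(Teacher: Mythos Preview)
Your proof is correct and follows essentially the same approach as the paper: enlarge each box by a small amount in every coordinate direction, using the strict slack $1-\sum_i\delta_i>0$. The only cosmetic difference is that the paper picks a box-dependent $\eps=\tfrac{1}{3n}(1-\sum_i\delta_i)$ while you choose a single uniform $\eta$; both work, and your version makes the existence of a uniform $\eps$-neighborhood slightly more explicit.
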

\begin{proof}
	We replace each box $\prod_{i=1}^n[x_i,x_i+\delta_i]$ with $\prod_{i=1}^n[x_i-\eps,x_i+\delta_i+\eps]$, where $\eps=\tfrac1{3n}(1-\sum_{i=1}^n\lambda_i)$ and depends on the specific box.
\end{proof}

\begin{lemma}\label{lem:compression along one dir}
	Suppose that the box $\prod_{i=1}^n[y_i,y_i+\gamma_i]$ is the union of a finite number of boxes from $\P_n^*$. Then for any $k$ and for any $0<\eps<\gamma_k$ each box can be modified so that the union of the resulting boxes is
	\begin{equation*}
	\left(\prod_{i=1}^{k-1}[y_i,y_i+\gamma_i]\right)\times
	[y_k+\eps,y_k+\gamma_k] \times
	\left(\prod_{i=k+1}^{n}[y_i,y_i+\gamma_i]\right)
	\end{equation*}
	while each new box is from $\P_n$.
\end{lemma}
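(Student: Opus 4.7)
The idea is to explicitly construct a bijective affine map that compresses the $k$-th coordinate and transforms every covering box in $\P_n^*$ into a box in $\P_n$. Define $T:\E^n\to\E^n$ by
\begin{equation*}
T(\x)_i=x_i\text{ for }i\ne k,\qquad T(\x)_k=y_k+\eps+\frac{\gamma_k-\eps}{\gamma_k}(x_k-y_k).
\end{equation*}
A direct computation shows that $T$ maps $\prod_{i=1}^n[y_i,y_i+\gamma_i]$ bijectively onto the target compressed box.

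The main step is to check the effect of $T$ on a covering box $P=\prod_{i=1}^n[x_i,x_i+\delta_i]\in\P_n^*$. The image $T(P)$ is again an axis-aligned box with the same widths $\delta_i$ for $i\ne k$ and width $\delta_k'=\delta_k(\gamma_k-\eps)/\gamma_k$ in the $k$-th direction. Since $\sum_{i=1}^n\delta_i\le 1$, we obtain
\begin{equation*}
\sum_{i=1}^n\delta_i'=\sum_{i=1}^n\delta_i-\frac{\eps}{\gamma_k}\delta_k\le 1-\frac{\eps}{\gamma_k}\delta_k,
\end{equation*}
which is strictly less than $1$ whenever $\delta_k>0$. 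In that case $T(P)\in\P_n$, as required.

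The obstacle — and the only subtle point — is the case $\delta_k=0$, where the inequality above is not strict. I would handle this by a preliminary reduction: boxes in the covering with $\delta_k=0$ lie in a hyperplane of the form $\{x_k=\text{const}\}$, hence have $n$-dimensional Lebesgue measure zero. Let $\mathcal{S}$ be the union of the covering boxes with $\delta_k>0$. Then $\mathcal{S}$ is closed, so $B\setminus \mathcal{S}$ is relatively open in $B$; but $B\setminus\mathcal{S}$ is contained in the union of the finitely many $\delta_k=0$ boxes, hence has measure zero. An open subset of $B$ of measure zero must be empty, so already the boxes with $\delta_k>0$ cover $B$. Any box in the original covering with $\delta_k=0$ may therefore be replaced (``modified'') by an arbitrary box from $\P_n$ contained in the compressed target, without affecting the union.

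Combining these observations, since $T$ is a bijection, applying it to each of the remaining boxes (those with $\delta_k>0$) yields a family of boxes in $\P_n$ whose union is $T(B)$, i.e., the desired compressed box. This completes the argument.
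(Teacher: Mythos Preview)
Your approach---apply an affine compression in the $k$-th coordinate---is exactly the paper's proof; the paper writes the one-variable map as $l$, notes its slope lies strictly between $0$ and $1$, and stops there without even spelling out the side-length computation you give. Your extra reduction disposing of boxes with $\delta_k=0$ is a good instinct (the paper simply ignores this case), but the measure argument as written needs $B=\prod_i[y_i,y_i+\gamma_i]$ to have nonempty interior in $\E^n$: you claim that a relatively open subset of $B$ of $n$-dimensional Lebesgue measure zero must be empty, which fails whenever some $\gamma_j=0$ for $j\ne k$---and that is precisely the situation in every use of the lemma in the paper, where $B$ is a lower-dimensional sub-box of $[0,1]^n$. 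The fix is easy: either run the same argument with Lebesgue measure on the affine hull of $B$ (the $\delta_k=0$ boxes still meet $B$ in null sets there, since $\gamma_k>0$), or argue directly that for each $\x\in B$ the segment through $\x$ in the $\e_k$-direction inside $B$ has positive length and meets the finitely many $\delta_k=0$ boxes in only finitely many points, so the rest of the segment lies in your $\mathcal{S}$, and by closedness so does $\x$.
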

\begin{proof}
	The idea is to linearly compress the whole structure along the $k$-th coordinate. Let $l:\R\to\R$ be the linear function satisfying $l(y_k+\gamma_k)=y_k+\gamma_k$ and $l(y_k)=y_k+\eps$ whose slope is clearly between $0$ and $1$. Now we simply replace each box $\prod_{i=1}^n[x_i,x_i+\delta_i]$ from the original union by
	\begin{equation*}
	\left(\prod_{i=1}^{k-1}[x_i,x_i+\delta_i]\right)\times
	[l(x_k),l(x_k+\delta_k)] \times
	\left(\prod_{i=k+1}^{n}[x_i,x_i+\delta_i]\right).
	\end{equation*}	
\end{proof}

%\begin{proof}
	Now we prove the required bound $C_n\le 2n(n-1)(n-2)^{n-2}+2n+1$.
	Obviously, any $A\in\A_n$ can be covered by $2n$ elements of $\P_n$, so $C_n=C(\A_n\cup B_{n-2,n},\P_n)\le 2n+ C(B_{n-2,n},\P_n)$ and it remains to show that $C(B_{n-2,n},\P_n)\le m+1$, where $m=2n(n-1)(n-2)^{n-2}$.
	Our strategy is to cover $B_{n-2,n}$ by $m$ elements of $\P_n^*$ first, then add one more box and modify the cover so that all boxes belong to $\P_n$.
	
	Each $(n-2)$-dimensional face $F$ of $[0,1]^n$ is the unit $(n-2)$-dimensional cube which is the union of $(n-2)^{n-2}$ $(n-2)$-dimensional cubes with the side length $\tfrac1{n-2}$ that clearly belong to $\P_n^*$. Since there are $2n(n-1)$ faces of dimension $n-2$, we obtain $m$ boxes from $\P_n^*$ that cover $B_{n-2,n}$. Let $\{P_i\}_{i=1}^m$ be the collection of all such boxes.
	
	We will find one more box $P_0\in\P_n$ and will describe a sequence of steps which modify the boxes from the collection $\{P_i\}_{i=0}^m$. At every step, we will have that $P_i\in\P_n^*$, $0\le i\le m$,
	\begin{equation}\label{eqn:B covering condition n}
	B_{n-2,n}\subset \bigcup_{i=0}^m P_i
	\qtq{and}
	B\subset \bigcup_{0\le i\le m,\, P_i\in\P_n}P_i
	\end{equation}
for a certain set $B\subset E^n$.	We will be done when we can achieve the above for $B=B_{n-2,n}$. Whenever we apply \cref{lem:eps neighborhood,lem:compression along one dir} below, we replace some boxes from $\{P_i\}_{i=0}^m$ with the boxes provided by the lemmas.
	
	First we fix an $(n-2)$-dimensional face $F$ of $[0,1]^n$, say $F=[0,1]^{n-2}\times\{0\}^2$, and show how to get~\eqref{eqn:B covering condition n} with $B=F$. We set $P_0:=[0,\tfrac1{n-2}]^{n-3}\times\{0\}^3$. Invoking \cref{lem:eps neighborhood} for $A=P_0$, we get~\eqref{eqn:B covering condition n} for $B=[0,\tfrac1{n-2}]^{n-3}\times[0,\eps_1]\times\{0\}^2$, where $0<\eps_1<\tfrac1{n-2}$. With this $\eps_1$ and $k=n-2$, we can apply \cref{lem:compression along one dir} to $[0,\tfrac1{n-2}]^{n-3}\times[0,1]\times\{0\}^2$, which, by construction, is the union of $n-2$ boxes from $\{P_i\}_{i=1}^m$. This yields~\eqref{eqn:B covering condition n} for $B=[0,\tfrac1{n-2}]^{n-3}\times[0,1]\times\{0\}^2$. Next we apply \cref{lem:eps neighborhood} to $A=[0,\tfrac1{n-2}]^{n-3}\times[0,1]\times\{0\}^2$ and obtain~\eqref{eqn:B covering condition n} for $B=[0,\tfrac1{n-2}]^{n-4}\times[0,\tfrac1{n-2}+\eps_2]\times[0,1]\times\{0\}^2$ for some $\eps_2>0$. Invoke \cref{lem:compression along one dir} for this $\eps_2$ and $k=n-3$ to the box $[0,\tfrac1{n-2}]^{n-4}\times[\tfrac1{n-2},1]\times[0,1]\times(0,0)$, which, by construction, is the union of $(n-3)(n-2)$ boxes from $\{P_i\}_{i=1}^m$ (none of these boxes were modified until now). This leads to~\eqref{eqn:B covering condition n} for $B=[0,\tfrac1{n-2}]^{n-4}\times[0,1]^2\times(0,0)$. Proceeding in this manner, subsequently applying \cref{lem:eps neighborhood,lem:compression along one dir} decreasing $k$ by $1$ at each step, we arrive at~\eqref{eqn:B covering condition n} for $B=F$.
	
	Now suppose that we already established~\eqref{eqn:B covering condition n} with $B=B'$ being the union of some $(n-2)$-dimensional faces of $[0,1]^n$, but $B'\ne B_{n-2,n}$. Then we can pick an $(n-2)$-dimensional face $F'$ of $[0,1]^n$ which is not contained in $B'$ and has some $(n-3)$-dimensional face in common with some $(n-2)$-dimensional face inside $B'$. Without loss of generality, $F'=\{0\}^2\times[0,1]^{n-2}$ and the common face is $\{0\}^3\times[0,1]^{n-3}\subset B'$. We proceed similarly to the previous paragraph: first use \cref{lem:eps neighborhood} for $A=\{0\}^3\times[0,1]^{n-3}$ which establishes~\eqref{eqn:B covering condition n} for $B=\{0\}^2\times[0,\eps]\times[0,1]^{n-3}$ and some $\eps>0$, and then invoke \cref{lem:compression along one dir} for this $\eps$, $k=3$ and $F'$ to arrive at~\eqref{eqn:B covering condition n} for $B=B'\cup F'$.
	
	Extending $B$ as above one $(n-2)$-dimensional face of $[0,1]^n$ at a time, in finitely many steps we get the desired~\eqref{eqn:B covering condition n} for $B=B_{n-2,n}$.
%\end{proof}

\section{Proof of \Cref{thm:main}}\label{sec:4 upper}

%\begin{proof}
	Now we prove $C_4\le96$. First we construct a specific cover to show that $C(B_{2,4},\P_4^*)\le 88$ and then modify that cover to establish $C(B_{2,4},\P_4)\le 89$ and $C_4=C(\A_4\cup B_{2,4},\P_4)\le 96$. In this section, for simplicity, we will omit ``of $[0,1]^4$'' when referring to faces of various dimension of the $4$-cube $[0,1]^4$. In particular, we simply say vertices, the term ``edges'' will be used for $1$-dimensional faces, ``faces'' are reserved for $2$-dimensional faces, and facets are $3$-dimensional faces.

	We begin with a cover satisfying $C(B_{2,4},\P_4^*)\le 88$. For each vertex, we take the box from $\P_4^*$ with all side lengths equal to $\tfrac14$ containing the vertex and lying entirely in $[0,1]^4$. For each edge $E$, we take a box from $\P_4^*$ which is the image of $[\tfrac14,\tfrac34]\times[0,\tfrac14]^2\times\{0\}$ under a certain symmetry of the cube $[0,1]^4$ mapping the edge joining $(0,0,0,0)$ and $(1,0,0,0)$ to $E$. There are three choices of such a symmetry: of three faces containing $E$, two faces will intersect with the box by a $\tfrac12$ by $\tfrac14$ rectangle, and one face will intersect with the box only by the edge $E$. In such faces where the intersection is $E$, we will call the edge $E$ special. Otherwise, the edge will be referred to as normal. %The specific choices of covering boxes corresponding to the edges will be carried out shortly.

We aim to obtain faces of two types, as illustrated in Figure~1.

\begin{pspicture}(-3,-2)(3,4)
\psaxes[linewidth=0.3 pt,labels=none,ticks=none]{->}(0,0)(-0.5,-0.5)(3.5,3.5)
\pspolygon[linewidth=1pt](0,0)(0.75,0)(0.75,0.75)(0,0.75)
\pspolygon[linewidth=1pt](2.25,0)(3,0)(3,0.75)(2.25,0.75)
\pspolygon[linewidth=1pt](0,2.25)(0.75,2.25)(0.75,3)(0,3)
\pspolygon[linewidth=1pt](2.25,2.25)(3,2.25)(3,3)(2.25,3)
\psline[linewidth=1pt](0.75,0)(2.25,0)
\psline[linewidth=1pt](0.75,1.5)(2.25,1.5)
\psline[linewidth=1pt](0.75,3)(2.25,3)
\psline[linewidth=1pt](0,0.75)(0,2.25)
\psline[linewidth=1pt](0.75,0.75)(0.75,2.25)
\psline[linewidth=1pt](2.25,0.75)(2.25,2.25)
\psline[linewidth=1pt](3,0.75)(3,2.25)
\psline[linestyle=dashed,linecolor=darkgray,linewidth=0.3 pt](0,1.5)(0.75,1.5)
\uput{0.1pt}[-90](0.75,-0.15){$\frac{1}{4}$}
\uput{0.1pt}[-90](2.25,-0.15){$\frac{3}{4}$}
\uput{0.1pt}[-180](-0.1,0.75){$\frac{1}{4}$}
\uput{0.1pt}[-180](-0.1,2.25){$\frac{3}{4}$}
\uput{0.1pt}[-180](-0.1,1.5){$\frac{1}{2}$}
\uput{0.1pt}[-90](1.5,-1.5){$\text{Type}~A$}
\psaxes[linewidth=0.3 pt,labels=none,ticks=none]{->}(7,0)(6.5,-0.5)(10.5,3.5)
\pspolygon[linewidth=1pt](7,0)(7.75,0)(7.75,0.75)(7,0.75)
\pspolygon[linewidth=1pt](9.25,0)(10,0)(10,0.75)(9.25,0.75)
\pspolygon[linewidth=1pt](7,2.25)(7.75,2.25)(7.75,3)(7,3)
\pspolygon[linewidth=1pt](9.25,2.25)(10,2.25)(10,3)(9.25,3)
\psline[linewidth=1pt](7.75,0)(9.25,0)
\psline[linewidth=1pt](7.75,0.75)(9.25,0.75)
\psline[linewidth=1pt](7.75,2.25)(9.25,2.25)
\psline[linewidth=1pt](7.75,3)(9.25,3)
\psline[linewidth=1pt](7,0.75)(7,2.25)
\psline[linewidth=1pt](7.75,0.75)(7.75,2.25)
\psline[linewidth=1pt](9.25,0.75)(9.25,2.25)
\psline[linewidth=1pt](10,0.75)(10,2.25)
\uput{0.1pt}[-90](7.75,-0.15){$\frac{1}{4}$}
\uput{0.1pt}[-90](9.25,-0.15){$\frac{3}{4}$}
\uput{0.1pt}[-180](6.9,0.75){$\frac{1}{4}$}
\uput{0.1pt}[-180](6.9,2.25){$\frac{3}{4}$}
\uput{0.1pt}[-90](8.5,-1.5){$\text{Type}~B$}
\end{pspicture}
\begin{center}
	Figure~1. Decomposition of faces by boxes of the partition
\end{center}

In type $A$ face the horizontal edges are special and the vertical ones are normal, while in type $B$ face all edges are normal. For each edge we want to pick exactly one of the three faces	containing the edge in which this edge will be special (normal in the other two faces) so that the faces become either type~A or type~B. While there are many such choices, we will describe a specific one which will be convenient for the second part of the proof. For each face of type~A, it suffices to indicate which direction is parallel to the two special edges. We use $x_j$, $j=1,\dots,4$ as the coordinate axes in $\E^4$. In each of the two facets $x_4=0$ and $x_4=1$ we assign all faces to be type~A and directions (as vectors in $(x_1,x_2,x_3)$) of special edges for each face to be as follows (here a single equation $x_i=t$, $t\in\{0,1\}$, $i=1,2,3$, describes a face as we already fixed $x_4$):
	\begin{align*}
		x_1=0 \qtq{and} x_1=1: & \quad (0,0,1),\\
		x_2=0 \qtq{and} x_2=1: & \quad (1,0,0),\\
		x_3=0 \qtq{and} x_3=1: & \quad (0,1,0).
	\end{align*}
	There are four more faces which will be assigned type~A. Each such face is given by $x_2=t$ and $x_3=s$, where $t,s\in\{0,1\}$, the direction of the special edges is always $(0,0,0,1)$. All faces that were not assigned type~A are assigned type~B. It is easy to verify that the described configuration satisfies our requirements, with $16$ faces of type~A and $8$ faces of type~B, and each edge being special in exactly one of the three containing faces. Overall, this forms a cover of $B_{2,4}$ with $16$ boxes corresponding to the vertices, $32$ boxes corresponding to the edges, $2\cdot 16$ additional boxes for faces of type~A and $8$ additional boxes for faces of type~B, providing $88$ boxes in total, as required for $C(B_{2,4},\P_4^*)\le 88$. We remark that in this cover no two boxes have overlapping interior. We need to leave this cover alone for some time.
	
%	[[it could be handy to have notations for the boxes corresponding to vertices, edges, faces of each type]]
	
	By $[\a,\b]:=\{t\a+(1-t)\b,t\in[0,1]\}$ we denote the line segment joining two points $\a, \b\in\E^4$. For $k=0,1,2$ we refer to the image of $[(\tfrac14,\tfrac k4,0,0),(\tfrac34,\tfrac k4,0,0)]$ under some symmetry of $[0,1]^4$ as side, semi-central and central segment, respectively. Our next goal is to show that for any $A_4\in\A_4$ there exists a segment $I$ which is either side, semi-central or central segment such that $C(A_4\cup I,\P_4)\le 8$.	We have
	\begin{equation*}
	A_4=\bigcup_{i=1}^4\{\x_i,\x_i+\e_i\}: \x_i,\x_i+\e_i\in [0,1]^4,
	\end{equation*}
	so $\x_i$ has $i$-th coordinate equal to zero, and let us refer to the value of the remaining three coordinates as non-trivial coordinates. Suppose there is $i$ such that $\x_i$ has all non-trivial coordinates in $[0,\tfrac14)\cup (\tfrac34,1]$. By symmetry, we can assume $i=4$,  $\x_4=(q,s,t,0)$ and $\tfrac14> q\ge s\ge t\ge0$, implying $s+t<\tfrac14+q$. Then the box $[q,\tfrac34]\times[0,s]\times[0,t]\times\{0\}$ is from $P_4$ and contains both $\x_4$ and the side segment $I:=[(\tfrac14,0,0,0),(\tfrac34,0,0,0)]$. We trivially cover the remaining $7$ points from $A_4$ using a box per point and get $C(A_4\cup I,\P_4)\le 8$.
	
	So in what follows we can assume that for every $i$ at least one of the non-trivial coordinates of $\x_i$ belongs to $[\tfrac14,\tfrac34]$. Suppose there is $i$ for which we have exactly one such coordinate. We can assume $i=4$, $\x_4=(q,s,t,0)$, $q\in[\tfrac14,\tfrac34]$ and $s,t\in[0,\tfrac14)$. Then the box $[\tfrac14,\tfrac34]\times[0,s]\times[0,t]\times\{0\}$ is from $P_4$ and contains both $\x_4$ and the side segment $[(\tfrac14,0,0,0),(\tfrac34,0,0,0)]$. We conclude as in the previous case. Next, if for some $i$ we have exactly two non-trivial coordinates of $\x_i$ in $[\tfrac14,\tfrac34]$, then assuming $i=4$, $\x_4=(q,s,t,0)$, $q,s\in[\tfrac14,\tfrac12]$ (by further application of symmetry) and $t\in[0,\tfrac14)$, the desired box will be $[\tfrac14,\tfrac34]\times[s,\tfrac12]\times[0,t]\times\{0\}$ and the (central) segment is $[(\tfrac14,\tfrac12,0,0),(\tfrac34,\tfrac12,0,0)]$.
	
	The above considerations allow us to assume that for every $i$ each non-trivial coordinate of $\x_i$ is in $[\tfrac14,\tfrac34]$. If there exists $i$ such that at least one non-trivial coordinate of $\x_i$ is in $[\tfrac14,\tfrac38)\cup(\tfrac58,1]$, then we can assume $i=4$, $\x_4=(q,s,t,0)$, $q,s\in[\tfrac14,\tfrac12]$ and $t\in[\tfrac14,\tfrac38)$. If $s<\frac38$, we take $[\tfrac14,\tfrac34]\times [\tfrac14,s]\times[0,t]\times\{0\}$ and the semi-central segment $[(\tfrac14,\tfrac14,0,0),(\tfrac34,\tfrac14,0,0)]$. If $s\ge\frac38$, we take $[\tfrac14,\tfrac34]\times [s,\tfrac12]\times[0,t]\times\{0\}$ and the central segment $[(\tfrac14,\tfrac12,0,0),(\tfrac34,\tfrac12,0,0)]$.
	
	So, we can assume that for every $i$ each non-trivial coordinate of $\x_i$ is in $[\tfrac38,\tfrac58]$. This situation is more complicated because it may require that we use two boxes covering respective $\x_i$-s together with a suitable segment. If there is at least one value of $i$ such that $\x_i$ has a non-trivial coordinate different from $\tfrac38$ or from $\tfrac58$, then we can assume $i=4$, $\x_4=(q,s,t,0)$, $q\in(\tfrac38,\tfrac12]$. Let $\x_3=(q',s',0,t')$, recall that we have $s,q',s'\in[\tfrac38,\tfrac58]$, and by symmetry we can assume $t,t'\le\tfrac12$. We will present two boxes from $\P_4$ that cover $\{\x_3,\x_4\}\cup I$, $I:=[(\tfrac14,\tfrac12,0,0),(\tfrac34,\tfrac12,0,0)]$, then the remaining six points form $A_4$ can be each covered by a box from $\P_4$ leading to the desired $C(A_4\cup I,\P_4)\le8$. It is convenient to denote $J(u):=[\min(\tfrac12,u),\max(\tfrac12,u)]$, $u\in[\tfrac38,\tfrac58]$. Then the length of $J(u)$ is $|\tfrac12-u|\le\tfrac18$. If $q'\le\tfrac12$, then we take $[q,\tfrac34]\times J(s)\times [0,t] \times\{0\}$ and $[\tfrac14,\tfrac12]\times J(s')\times \{0\}\times [0,t']$. If $q'>\tfrac12$, then we take $[\tfrac14,\tfrac12]\times J(s)\times [0,t] \times\{0\}$ and $[\tfrac12,\tfrac34]\times J(s')\times \{0\}\times [0,t']$. At last, we can assume that for every $i$ any non-trivial coordinate is either $\tfrac38$ or $\tfrac58$. In this case, we can assume $\x_3=(q',s',0,\tfrac38)$, $\x_4=(q,s,\tfrac38,0)$, $q'\le q$, and then simply take $[q',\tfrac34]\times J(s')\times \{0\}\times [0,\tfrac38]$ and $[\tfrac14,q]\times J(s)\times [0,\tfrac38] \times \{0\}$ that cover $\{\x_3,\x_4\}\cup I$.
	
	Now we can return to the cover fulfilling $C(B_{2,4},\P_4^*)\le88$ that we constructed earlier. We also have $C(A_4\cup I,\P_4)\le 8$. Using a symmetry, if needed, we can make the following assumption. If $I$ is a side segment, then $I=[(\tfrac14,0,0,0),(\tfrac34,0,0,0)]$. If $I$ is a semi-central segment, then $I=[(0,0,\tfrac14,\tfrac14),(0,0,\tfrac34,\tfrac14)]$. If $I$ is a central segment, then $I=[(\tfrac14,0,\tfrac12,0),(\tfrac34,0,\tfrac12,0,0)]$. Let $\{P_i\}_{i=1}^{96}$ be the boxes from $\P_4^*$ we constructed above to cover $B_{2,4}\cup A_4\cup I$. Eight of them are already in $\P_4$, and similarly to what was done in \cref{sec:general upper} we will perform a sequence of modifications of the collection $\{P_i\}_{i=1}^{96}$. At each step, we will have
	\begin{equation}\label{eqn:B covering condition 4}
	B_{2,4}\cup A_4 \subset \bigcup_{i=1}^{96}P_i
	\qtq{and}
	B\cup A_4\subset \bigcup_{1\le i\le 96,\, P_i\in\P_4}P_i,
	\end{equation}
	with the goal of reaching the above for $B=B_{2,4}\cup A_4$. Initially, we have~\eqref{eqn:B covering condition 4} for $B$ being a neighborhood of $I$, after application of \cref{lem:eps neighborhood} to $I$.
	
	By $B_{2,4}^*$ we denote the closure of the set obtained by removing from $B_{2,4}$ the union of all boxes of the cover corresponding to the vertices. (This removes four ``corner'' squares with side length $\tfrac14$ from each face.) We will modify these boxes in the very end of the procedure and now we focus on covering $B_{2,4}^*$. The idea is to obtain~\eqref{eqn:B covering condition 4} for $B=B_{2,4}^*$ by adding one ``cornerless'' face at a time.
	
	Each box of the cover corresponding to an edge will be modified in a specific way, reducing the total sum of the dimensions of the box. Namely, when we are adding a face in which this edge is normal, say, without loss of generality, when the face is $x_3=x_4=0$, the edge is $x_2=x_3=x_4=0$, and the original box is $[\tfrac14,\tfrac34]\times [0,\tfrac14]\times\{0\}\times[0,\tfrac14]$, then modified box will be $[\tfrac14,\tfrac34]\times [0,\tfrac14-3\eps]\times[0,\eps]\times[0,\tfrac14+\eps]$ with sufficiently small $\eps>0$. We will not make further changes to this box when other faces containing this edge are added to the union.
	
	When a face of type~A is added, there will be two possible situations. First one: a neighborhood of the corresponding central segment is already covered (parallel to the special edges). This can happen only when the first face is added, so the face is $x_2=x_4=0$ and the central segment is $[(\tfrac14,0,\tfrac12,0),(\tfrac34,0,\tfrac12,0,0)]$. Then we replace the two boxes corresponding to the face which are $[\tfrac14,\tfrac34]\times\{0\}\times[0,\tfrac12]\times\{0\}$ and $[\tfrac14,\tfrac34]\times\{0\}\times[\tfrac12,1]\times\{0\}$ with $[\tfrac14-\eps,\tfrac34+\eps]\times\{0\}\times[0,\tfrac12-3\eps]\times\{0\}$ and $[\tfrac14-\eps,\tfrac34+\eps]\times\{0\}\times[\tfrac12+3\eps,1]\times\{0\}$, for sufficiently small $\eps>0$. The second (more typical) situation: a neighborhood of a side segment contained in a special edge is already covered. Let, without loss of generality, the face be $x_2=x_4=0$ and the side segment be $[(\tfrac14,0,0,0),(\tfrac34,0,0,0,0)]$. Then we replace $[\tfrac14,\tfrac34]\times\{0\}\times[0,\tfrac12]\times\{0\}$ and $[\tfrac14,\tfrac34]\times\{0\}\times[\tfrac12,1]\times\{0\}$ with $[\tfrac14-\eps,\tfrac34+\eps]\times\{0\}\times[6\eps,\tfrac12+3\eps]\times\{0\}$ and $[\tfrac14-\eps,\tfrac34+\eps]\times\{0\}\times[\tfrac12+3\eps,1]\times\{0\}$ for sufficiently small $\eps>0$. In either of the situations, we can modify any of the not yet modified boxes corresponding to the normal edges of this face and then add the face to the union.
	
	When a face of type~B is added, one of the boxes corresponding to the edges will be already modified or a neighborhood of a semi-central segment in this face will be covered. In either of situations, without loss of generality, we can assume that a neighborhood of $[(0,0,\tfrac14,\tfrac14),(0,0,\tfrac34,\tfrac14)]$ is covered and the face is $x_1=x_2=0$. We replace $\{0\}^2\times [\tfrac14,\tfrac34] \times [\tfrac14,\tfrac34]$ with $\{0\}^2\times [\tfrac14-\eps,\tfrac34+\eps] \times [\tfrac14+4\eps,\tfrac34+\eps]$ for sufficiently small $\eps>0$ and modify the not yet modified boxes corresponding to the sides of this face.
	
	Now we describe how to add all faces to the union using the above operations. If $I$ is a side or a central segment, we can begin with the face $x_2=x_4=0$. Note that if the union already contains a face with a normal edge, then our operations allow to add another face in which this edge is special. So, we can add all faces from the facet $x_4=0$ for instance in the following order: $x_1=0$, $x_1=1$, $x_3=0$, $x_3=1$, $x_2=1$. Next we can add all faces having one side in $x_4=0$ and the other side in $x_4=1$, in arbitrary order. Such faces are either type~B with the box corresponding to the edge belonging to $x_4=0$ already modified or type~A with one of the special sides in $x_4=0$ which is already covered. We conclude by adding the faces from the facet $x_4=1$ in the same order as was done for $x_4=0$. Now if $I$ is a semi-central segment, we can begin with the face $x_1=x_2=0$, proceed with adding $x_2=x_4=0$ and then we can follow the order of adding faces as in the previous case when $I$ was a side or a central segment. The only difference will be that we simply skip the face $x_1=x_2=0$. 	

	So, we obtained~\eqref{eqn:B covering condition 4} for $B=B_{2,4}^*$. Now we apply \cref{lem:eps neighborhood} for $A=B_{2,4}^*$ and then modify each of the boxes corresponding to the vertices so that the resulting boxes still contain the vertices and have all side lengths equal to $\tfrac14-\eps$, for sufficiently small $\eps>0$ yielding~\eqref{eqn:B covering condition 4} for $B=B_{2,4}$. This completes the proof.
%\end{proof}

{\bf Conflict of interest.} On behalf of all authors, the corresponding author states
that there is no conflict of interest.

{\bf Acknowledgment.} The authors are grateful to the anonymous referee for the valuable suggestions that improved the paper.

\begin{bibsection}
\begin{biblist}

\bib{Be}{article}{
	author={Bezdek, K\'{a}roly},
	author={Khan, Muhammad A.},
	title={The geometry of homothetic covering and illumination},
	conference={
		title={Discrete geometry and symmetry},
	},
	book={
		series={Springer Proc. Math. Stat.},
		volume={234},
		publisher={Springer, Cham},
	},
	date={2018},
	pages={1--30},
%	review={\MR{3816868}},
}

\bib{Bo}{article}{
	author={Boltyanski, V.},
	title={The problem of illuminating the boundary of a convex body},
	journal={Izv. Mold. Fil. AN SSSR},
	volume={76},
	date={1960},
	pages={77--84}
}

\bib{Ha}{article}{
	author={Hadwiger, H.},
	title={Ungel\"{o}stes Probleme Nr. 20},
	language={German},
	journal={Elem. Math.},
	volume={12},
	date={1957},
	pages={121}
}

%\bib{HLP}{book}{
%	author={Hardy, G. H.},
%	author={Littlewood, J. E.},
%	author={P\'{o}lya, G.},
%	title={Inequalities},
%	note={2d ed},
%	publisher={Cambridge, at the University Press},
%	date={1952},
%	pages={xii+324},
%%	review={\MR{0046395}},
%}

%\bib{HSTV}{article}{
%	author={Huang, H.},
%	author={Slomka, B. A.},
%	author={Tkocz, T.},
%	author={Vritsiou, B.-H.},
%	title={Improved bounds for Hadwiger's covering problem via thin-shell estimates},
%	eprint={http://arxiv.org/abs/1811.12548}
%}

\bib{La}{article}{
	author={Lassak, Marek},
	title={Covering the boundary of a convex set by tiles},
	journal={Proc. Amer. Math. Soc.},
	volume={104},
	date={1988},
	number={1},
	pages={269--272},
%	issn={0002-9939},
%	review={\MR{958081}},
%	doi={10.2307/2047500},
}

\bib{Pa}{article}{
	author={Papadoperakis, Ioannis},
	title={An estimate for the problem of illumination of the boundary of a
		convex body in $E^3$},
	journal={Geom. Dedicata},
	volume={75},
	date={1999},
	number={3},
	pages={275--285},
%	issn={0046-5755},
%	review={\MR{1689273}},
%	doi={10.1023/A:1005056207406},
}

\bib{Ro}{article}{
	author={Rogers, C. A.},
	title={A note on coverings},
	journal={Mathematika},
	volume={4},
	date={1957},
	pages={1--6},
%	issn={0025-5793},
%	review={\MR{0090824}},
%	doi={10.1112/S0025579300001030},
}

\bib{Ro-Sh}{article}{
	author={Rogers, C. A.},
	author={Shephard, G. C.},
	title={The difference body of a convex body},
	journal={Arch. Math. (Basel)},
	volume={8},
	date={1957},
	pages={220--233},
%	issn={0003-889X},
%	review={\MR{0092172}},
%	doi={10.1007/BF01899997},
}

\end{biblist}
\end{bibsection}

\end{document}